\documentclass[12pt]{article}\textwidth 160mm\textheight 235mm
\oddsidemargin-2mm\evensidemargin-2mm\topmargin-10mm
\usepackage{amsmath}
\usepackage{amsthm}
\usepackage{amsfonts}
\usepackage{multirow}
\usepackage[table,xcdraw]{xcolor}
\usepackage{subcaption}
\usepackage{enumerate}
\usepackage{float}
\usepackage{algorithm}
\usepackage{algpseudocode}
\usepackage{hyperref}
\usepackage{tabularx}
\usepackage{amssymb}
\usepackage{graphicx}
\usepackage{tcolorbox}
\usepackage{booktabs}
\usepackage[table,xcdraw]{xcolor}
\usepackage{multirow}
\usepackage{rotating,tabularx}

\usepackage[left=2cm,right=2cm,top=2cm,bottom=2cm]{geometry}
\newcolumntype{L}[1]{>{\raggedright\let\newline\\\arraybackslash\hspace{0pt}}m{#1}}
\newcolumntype{C}[1]{>{\centering\let\newline\\\arraybackslash\hspace{0pt}}m{#1}}
\newcolumntype{R}[1]{>{\raggedleft\let\newline\\\arraybackslash\hspace{0pt}}m{#1}}

\newtheorem{Theorem}{Theorem}

\newtheorem{Remark}[Theorem]{Remark}
\newtheorem{Lemma}[Theorem]{Lemma}
\newtheorem{Corollary}[Theorem]{Corollary}

\newtheorem{Example}[Theorem]{Example}
\newtheorem{example}[Theorem]{Example}

\usepackage{hyperref}
\hypersetup{colorlinks=true,urlcolor=blue}
\expandafter\let\expandafter\oldproof\csname\string\proof\endcsname
\let\oldendproof\endproof
\renewenvironment{proof}[1][\proofname]{
\oldproof[\ttfamily\scshape \bf #1.]
}{\oldendproof}

\def\conv{{\rm conv}\,}

\def\ra{\rangle}
\def\la{\langle}

\def\epsilon{\varepsilon}

\def \R{{\rm I\!R}}


\title{\bf Lipschitz Modulus of Convex Functions via Function Values}

\author{Pham Duy Khanh\footnote{Department of Mathematics, Ho Chi Minh City University of Education, Ho Chi Minh City, Vietnam. E-mail: khanhpd@hcmue.edu.vn.} \quad  Vu Vinh Huy Khoa\footnote{Department of Mathematics, Wayne State University, Detroit, Michigan, USA. E-mail: khoavu@wayne.edu.}  \quad  Vo Thanh Phat\footnote{Department of Mathematics  and Statistics, University of North Dakota, Grand Forks, North Dakota, USA. E-mail: vtphat1996@gmail.com.} \quad Le Duc Viet\footnote{Department of Mathematics, Wayne State University, Detroit, Michigan, USA. E-mail: vietle@wayne.edu.}}
\begin{document}
\maketitle

\noindent
{\small{\bf Abstract}. In this note, we establish the Lipschitz continuity of finite-dimensional globally convex functions on all given balls and global Lipschitz continuity for eligible functions of that type. The Lipschitz constants in both situations draw information solely from function values, and the global Lipschitz modulus is found when it exists. Some examples of classes of globally Lipschitz continuous convex functions beside the norms are also provided along with their global Lipschitz modulus. \\[1ex]
{\bf Key words}. Convexity, Lipschitz continuity, Lipschitz modulus, finite-dimensional Euclidean spaces\\[1ex]
{\bf Mathematics Subject Classification (2020)}.  } 26A16, 52A41.

\section{Introduction}

Convexity is well-known as an attempt to generalize optimization theory beyond the classical smooth setting, while Lipschitz continuity plays a huge role in assessing the stability of optimization solutions and the convergence of numerical algorithms. The local Lipschitz continuity of convex functions is a classical property that has been well-established in Convex Analysis. For instance, in \cite{roberts}, the authors provided a proof for local Lipschitz continuity of convex functions on open sets, which establishes for every point the existence of a neighborhood in which the function is Lipschitz continuous. However, determining the Lipschitz constant for such continuity is not a trivial feat, and in many cases requires the evaluation of the derivative (or generalized derivatives). Here, we present the proof for a stronger claim specialized for finite-dimensional real-valued globally convex functions, stating that the function is Lipschitz continuous on all given spherical balls. Furthermore, we also establish the Lipschitz constant which draws information solely from the function values at finitely many points on the boundary of an associated convex set. The idea of controlling the information on the boundary is not new - for instance, it has been utilized in \cite{khanh} where the authors used the subdifferential information of the given function. However, using only the function values is a novel approach to this matter. As a subsequent effort, we also established the Lipschitz modulus for globally Lipschitz continuous convex functions and provided a characterization for such functions in finite-dimensional spaces. Namely, a convex function is globally Lipschitz continuous if and only if it is asymptotically majorized by a norm as the norm of the variable is sufficiently large. Some further discussions and illustrating examples provide some insight on globally Lipschitz continuous convex functions beyond the norms.
\section{Main results}
Let $\R^n$ be the standard Euclidean space of dimension $n$ with the Euclidean norm signified by $\|.\|$. The corresponding open ball with center $x_0 \in \R^n$ and radius $r > 0$ is denoted by $B(x_0,r)$. A function $f: \R^n \to \R$ is \textit{convex} if for all $x, y \in \R^n$, for all $\lambda \in [0,1]$,
$$f(\lambda x + (1 - \lambda)y) \le \lambda f(x) + (1 - \lambda) f(y). $$
A function $f: \R^n \to \R$ is Lipschitz continuous on a set $S \subset \R^n$ if there exists $L \ge 0$, which we call the {\em Lipschitz constant}, such that
\begin{equation}\label{ep:Lip}
\forall x, y \in S, \quad |f(x) - f(y)| \le L\|x - y\|.
\end{equation}
The infimum of all $L$ which satisfies \eqref{ep:Lip} is called the \textit{Lipschitz modulus} of $f$ on $S$. If the set $S$ in question is $\R^n$, we say that $f$ is \textit{globally Lipschitz continuous}.
For a convex set $S$, let $\mathrm{extr} (S)$ signify the set of \textit{extreme points} of $S$. Let $\conv (S)$ denote the \textit{convex hull} of a (not necessarily convex) set $S$. As a quick observation, the supremum of a convex function on any set $S$ is precisely the supremum of that function on the (generally bigger) convex set $\conv (S)$. \medskip

The following technical lemma provides a parametrical Lipschitz constant of a convex function on a specified ball that extracts information exclusively from the function values on a polytope which contains a magnified version of that ball. 
\begin{Lemma}\label{lem:lipschitz} Let $f: \R^n \to \R$ be convex. Pick any $x_0 \in \R^n$, radius $r > 0$ and two parameters $\lambda\in (0,1)$, $\alpha > \max \left\{ 1, \dfrac{\lambda}{1-\lambda}\right\}$. Let $S$ be a finite set of points such that $B(x_0,\alpha r) \subset \conv (S)$. Then, $f$ is Lipschitz continuous on $B(x_0,r)$ with constant
\begin{equation}\label{eq:lipschitzS}
    L(\lambda,\alpha,r) = \dfrac{1}{r\lambda (\alpha - 1)}\cdot\left[\max \{ f(z) \mid z \in S \} - f(x_0)\right].
\end{equation}
\end{Lemma}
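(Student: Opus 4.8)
The plan is to reduce the whole estimate to two applications of the convexity inequality along well-chosen line segments. Write $M:=\max\{f(z)\mid z\in S\}$. The first ingredient, recorded just before the lemma, is that the supremum of a convex function over a set equals its supremum over the convex hull; since $S$ is finite this common value is $M$, so $f(u)\le M$ for every $u\in\conv(S)$, and in particular for every $u\in B(x_0,\alpha r)$. Because $\alpha>1$, this already gives $f(x_0)\le M$ and $f(x)\le M$ for all $x\in B(x_0,r)$, so the constant $L(\lambda,\alpha,r)$ is well defined and nonnegative.

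Now fix $x,y\in B(x_0,r)$ with $x\ne y$; by symmetry it suffices to bound $f(y)-f(x)$ from above by $L(\lambda,\alpha,r)\,\|x-y\|$. First I would push $y$ one step further along the ray emanating from $x$ through $y$: set $z:=y+(\alpha-1)r\,\dfrac{y-x}{\|y-x\|}$. Since $\|z-x_0\|\le\|z-y\|+\|y-x_0\|<(\alpha-1)r+r=\alpha r$, the point $z$ lies in $B(x_0,\alpha r)\subset\conv(S)$, so $f(z)\le M$; the hypothesis $\alpha>1$ is exactly what makes this forward step legal. A direct computation shows $y=(1-s)x+sz$ with $s:=\|y-x\|/\bigl(\|y-x\|+(\alpha-1)r\bigr)\in(0,1)$, whence convexity gives $f(y)-f(x)\le s\bigl(f(z)-f(x)\bigr)\le s\bigl(M-f(x)\bigr)$.

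The remaining task is to replace $M-f(x)$ by $M-f(x_0)$, and this is where the parameter $\lambda$ enters. Here I would realize $x_0$ as a convex combination of $x$ and an auxiliary point lying in the enlarged ball: put $w:=\dfrac{1}{1-\lambda}\,x_0-\dfrac{\lambda}{1-\lambda}\,x$, so that $x_0=(1-\lambda)w+\lambda x$ and $\|w-x_0\|=\dfrac{\lambda}{1-\lambda}\|x-x_0\|<\dfrac{\lambda}{1-\lambda}\,r<\alpha r$, the last inequality being precisely the hypothesis $\alpha>\lambda/(1-\lambda)$. Hence $w\in B(x_0,\alpha r)\subset\conv(S)$, so $f(w)\le M$, and convexity yields $f(x_0)\le(1-\lambda)M+\lambda f(x)$, that is, $\lambda\bigl(M-f(x)\bigr)\le M-f(x_0)$. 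Substituting this into the previous estimate and using $s\le\|y-x\|/\bigl((\alpha-1)r\bigr)$ together with $M-f(x)\ge 0$, I obtain
\[
f(y)-f(x)\le\frac{\|y-x\|}{(\alpha-1)r}\cdot\frac{M-f(x_0)}{\lambda}=L(\lambda,\alpha,r)\,\|x-y\|.
\]
Swapping the roles of $x$ and $y$ then yields \eqref{ep:Lip} on $B(x_0,r)$ with the asserted constant.

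As for where the difficulty lies: there is no real analytic obstacle — every inequality used is a single line of convexity or of the triangle inequality. The only point requiring genuine care is the choice of the two auxiliary points $z$ and $w$, and the recognition that the two lower bounds imposed on $\alpha$ are not technical artifacts but exactly the conditions guaranteeing $z,w\in B(x_0,\alpha r)\subset\conv(S)$, which is what licenses the bound $f\le M$ at those points.
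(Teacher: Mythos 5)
Your proof is correct and follows essentially the same route as the paper's: the upper bound $M$ via the convex hull, the extension of the segment $[x,y]$ by length $(\alpha-1)r$ into $B(x_0,\alpha r)$ (the paper does this on both sides via a secant inequality, you do it on one side and invoke symmetry), and the reflection $w=\frac{1}{1-\lambda}x_0-\frac{\lambda}{1-\lambda}x$ through $x_0$ to convert $M-f(x)$ into $\frac{1}{\lambda}\bigl(M-f(x_0)\bigr)$, which is exactly the paper's lower-bound step applied pointwise. The streamlining is harmless and all the hypotheses on $\alpha$ are used exactly where the paper uses them.
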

\begin{proof} First, we establish the upper bound and the lower bound of $f$ on suitable balls using the function values of $f$ on $S$. Since $f$ is convex,
    \begin{equation}\label{eq:upper}
        \sup\limits_{z \in B(x_0,\alpha r)} f(z) \le \sup\limits_{z \in \conv (S)} f(z) = \sup\limits_{z \in S} f(z) = \max \{ f(z) \mid z \in S \} =: M. 
    \end{equation}
    Pick any $z \in B\left(x_0,\alpha r\cdot \dfrac{1-\lambda}{\lambda}\right)$, and define $w \in \R^n$ by $w:= \dfrac{1}{1 - \lambda}x_0 - \dfrac{\lambda}{1 - \lambda}z. $
    Then, it is easily seen that $x_0 = \lambda z + (1-\lambda)w$ and
    $$\|w - x_0\| = \left\|\dfrac{\lambda}{1 - \lambda} x_0 - \dfrac{\lambda}{1 - \lambda}z \right\| = \dfrac{\lambda}{1 - \lambda}\|x_0 - z\| \le \dfrac{\lambda}{1 - \lambda}\cdot \alpha r\cdot \dfrac{1-\lambda}{\lambda} = \alpha r, $$
    which entails $w \in B(x_0,\alpha r)$. Since $f$ is convex, $f(x_0) \le \lambda f(z) + (1 - \lambda) f(w)$, which means $$f(z) \ge \dfrac{1}{\lambda}f(x_0) - \dfrac{1-\lambda}{\lambda} f(w) \ge \dfrac{1}{\lambda}f(x_0) - \dfrac{1-\lambda}{\lambda}\cdot M. $$
    Hence, 
    \begin{equation}\label{eq:lower}
      \inf\limits_{z \in B\left(x_0,\alpha r\cdot \frac{1-\lambda}{\lambda}\right)} f(z) \ge \dfrac{1}{\lambda}f(x_0) - \dfrac{1-\lambda}{\lambda}\cdot M.
    \end{equation}
Now, pick distinct elements $x, y \in B(x_0,r)$. Construct the univariate function $g: \R \to \R$ as follows:
    $$\forall t \in \R, \quad g(t) := f(tx + (1-t)y), $$
    then $g$ is convex on $\R$ by the convexity of $f$ on $\R^n$. By the secant inequality for convex functions of one variable, we have
    \begin{equation}\label{eq:secant}
       \dfrac{g(0) - g \left( \dfrac{-r(\alpha-1)}{\|y - x\|}\right)}{\dfrac{r(\alpha-1)}{\|y-x\|}} \le \dfrac{g(1) - g(0)}{1} \le \dfrac{g \left( \dfrac{r(\alpha-1)}{\|y - x\|} + 1\right) - g(1)}{\dfrac{r(\alpha-1)}{\|y-x\|}}.
    \end{equation}
    Set $z_1 := y + \dfrac{r(\alpha-1)}{\|y - x\|}(y - x)$, $z_2 := x + \dfrac{r(\alpha-1)}{\|y - x\|}(x - y)$, then 
    $$g \left( \dfrac{-r(\alpha-1)}{\|y - x\|}\right) = f(z_1), \quad g \left( \dfrac{r(\alpha-1)}{\|y - x\|} + 1\right) = f(z_2).$$
    We rewrite \eqref{eq:secant} as follows:
    \begin{equation}\label{eq:secant2}
      \dfrac{f(y) - f(z_1)}{r(\alpha-1)} \le \dfrac{f(x)-f(y)}{\|x - y\|} \le \dfrac{f(z_2)-f(x)}{r(\alpha-1)}.  
    \end{equation}
    Furthermore,
    $$\|z_1 - x_0\| \le \left\| y - x_0 \right\|+ r(\alpha-1) \dfrac{\|y - x\|}{\|y-x\|} < r \alpha, $$
    $$\|z_2 - x_0\| \le \left\| x - x_0 \right\|+ r(\alpha-1) \dfrac{\|y - x\|}{\|y-x\|} < r\alpha . $$
    Hence, $z_1,z_2 \in B\left(x_0,\alpha r \right)$. Observe that $r \le  r \alpha\dfrac{1-\lambda}{\lambda}$ and $x,y \in B(x_0,r)$ so $x, y \in B \left(x_0, \alpha r\cdot \dfrac{1-\lambda}{\lambda}\right)$. By \eqref{eq:upper} and \eqref{eq:lower}, we have 
     $$\dfrac{f(z_2) - f(x)}{r(\alpha-1)} \le \dfrac{1}{r(\alpha-1)}\left( M - \dfrac{1}{\lambda}f(x_0) + \dfrac{1 - \lambda}{\lambda}\cdot M\right) = \dfrac{1}{r\lambda(\alpha-1)}(M - f(x_0)), $$
     $$\dfrac{f(y) - f(z_1)}{r(\alpha-1)} \ge \dfrac{1}{r(\alpha-1)}\left( \dfrac{1}{\lambda}f(x_0) - \dfrac{1 - \lambda}{\lambda}\cdot M - M \right) = \dfrac{1}{r\lambda(\alpha-1)} \left( f(x_0) - M \right). $$
     Combined with \eqref{eq:secant2}, we obtain
     $$
      \dfrac{|f(x) - f(y)| }{\|x-y\|} \le \dfrac{1}{r\lambda (\alpha - 1)}\cdot\left[\max \{ f(z) \mid z \in S \} - f(x_0)\right].
     $$
  Since $x$ and $y$ are taken arbitrarily from $B(x_0,r)$, we obtain our conclusion.
\end{proof}

\begin{Remark}\rm Utilizing Lemma \ref{lem:lipschitz} with a thoughtful choice of $S$ produces a Lipschitz constant for the function on a given ball $B(x_0,r)$ at the cost of calculating the function value at a relative small number of points. As an example, for any convex function $f: \R^n \to \R$, with the choice
\begin{equation}\label{eq:S}
    S:= \{ x_0 \pm \alpha nre_i \mid i = 1,2,\ldots,n\}. 
\end{equation}
    where $\{e_i | i = 1,2,\ldots,n\}$ is the standard basis of $\R^n$, Lemma \ref{lem:lipschitz} only requires $2n$ function values to generate a Lipschitz constant. Indeed, to apply Lemma \ref{lem:lipschitz} for the set in \eqref{eq:S}, we need to show that $B(x_0,\alpha r) \subset \conv (S)$. Take any $x \in B(x_0,\alpha r)$, then we can write $x = x_0 + \alpha ru, $
where $\|u\| \le 1$. Next, we write $u = \sum\limits_{i=1}^n \lambda_i e_i$
where for all $i = 1,2,\ldots,n$, $|\lambda_i| \le \sqrt{\sum\limits_{i=1}^n \lambda_i^2} = \|u\| \le 1.$ Hence, for each $i$, we can write $\lambda_i = t_i(-1) + (1-t_i)1$ for $t_i \in [0,1]$. Thus,
$$\begin{aligned}
 x = x_0 + \alpha r \sum\limits_{i=1}^n \lambda_i e_i &= \sum\limits_{i=1}^n \dfrac{1}{n}\left( x_0 + \alpha r n\lambda_i e_i  \right) = \sum\limits_{i=1}^n \dfrac{1}{n}\Big[ x_0 + \alpha rn\big(t_i(-1) + (1-t_i)1\big) e_i  \Big]\\
 &= \sum\limits_{i=1}^n \dfrac{1}{n}\big[ t_i(x_0 - \alpha rne_i) + (1- t_i)(x_0 + \alpha rne_i) \big].    
\end{aligned}$$
Hence, $x \in \conv (S)$, which establishes the needed inclusion. Note that we can further reduce the number of required points from $2n$ to $n+1$ (but not beyond) by finding a simplex that contains the ball $B(x_0,r)$. To see why a $n$-dimensional simplex is optimal for this purpose, a polytope that contains the ball $B(x_0,r)$ must take $\R^n$ as its affine hull, and for this to occur, that polytope must have at least $n+1$ generators.
\end{Remark}

Next, we attempt to apply Lemma \ref{lem:lipschitz} to find a global Lipschitz constant for eligible functions, and consequently establish a characterization for globally Lipschitz continuous convex functions on $\R^n$. To see this end, let us mention an important observation (see \cite[Lemma 5.8]{thesis}): For any $\varepsilon > 0$, there exists a finite set of points $S$ such that $B(0,1) \subset \conv (S) \subset B(0,1+\varepsilon)$. Let $\mathrm{extr}(\conv (S))$ be the set of extreme points of $\conv (S)$. Then, $\conv (S) = \conv (\mathrm{extr}(\conv (S)))$ and $1 \le \|x\| \le 
1 + \varepsilon$ for all $x \in \mathrm{extr}(\conv (S))$. We then proceed to replace $S$ by $\mathrm{extr}(\conv (S))$ (which has a finite number of elements), and via a suitable affine transformation, one easily obtains the following:

\begin{Lemma}\label{lem:polyhedral} For any $\varepsilon > 0$, and any $x_0 \in \R^n$, $r > 0$, there exists a finite set of points $S$ such that $S \subset B(x_0,r+\varepsilon) \setminus B(x_0,r)$ and $B(x_0,r) \subset \conv (S) \subset B(x_0,r+\varepsilon)$.
\end{Lemma}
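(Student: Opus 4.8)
The plan is to reduce everything to the unit ball and then push forward by a dilation–translation, exactly as the paragraph preceding the lemma suggests. Concretely, I would start from the cited observation (\cite[Lemma 5.8]{thesis}): for the auxiliary parameter $\varepsilon' := \varepsilon / r > 0$ there is a finite set $S'$ with $B(0,1) \subset \conv(S') \subset B(0,1+\varepsilon')$. The set $\conv(S')$ is a polytope, so its extreme points form a finite set $E := \mathrm{extr}(\conv(S'))$, and $\conv(E) = \conv(S')$ because a polytope is the convex hull of its vertices; thus the chain $B(0,1) \subset \conv(E) \subset B(0,1+\varepsilon')$ still holds. The key point is that each $e \in E$ satisfies $\|e\| \ge 1$: otherwise $e$ would lie in the open ball $B(0,1) \subset \conv(E)$, hence in the interior of $\conv(E)$, contradicting that $e$ is an extreme point. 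Since moreover $e \in \conv(E) \subset B(0,1+\varepsilon')$ and the latter ball is open, $\|e\| < 1+\varepsilon'$. Hence $E \subset B(0,1+\varepsilon') \setminus B(0,1)$.

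Next I would transfer this to the ball $B(x_0,r)$ via the affine bijection $T \colon \R^n \to \R^n$, $T(y) := x_0 + r y$. This map sends $B(0,\rho)$ onto $B(x_0, r\rho)$ for every $\rho > 0$ (in particular $B(0,1) \mapsto B(x_0,r)$ and $B(0,1+\varepsilon') \mapsto B(x_0, r + \varepsilon)$), it preserves inclusions and set differences since it is bijective, and, being affine, it commutes with the convex hull operation, $\conv(T(A)) = T(\conv(A))$. Setting $S := T(E)$, which is finite, and applying $T$ to the two relations established for $E$ yields $B(x_0,r) \subset \conv(S) \subset B(x_0,r+\varepsilon)$ together with $S \subset B(x_0,r+\varepsilon) \setminus B(x_0,r)$, which is the assertion.

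I do not expect a genuine obstacle here, since the statement is in essence a change of variables applied to the cited fact; the two points that need care are (i) justifying that the extreme points of $\conv(S')$ are finitely many and all have norm at least $1$ — this is where the polytope structure and the principle that no interior point of a convex set is extreme come in — and (ii) being consistent about open versus closed balls, which is why I take $\varepsilon' = \varepsilon / r$ and invoke the openness of $B(0,1+\varepsilon')$ to obtain the strict outer inequality placing $S$ inside $B(x_0,r+\varepsilon)$ rather than merely its closure. The commutation of $T$ with $\conv(\cdot)$ and with inclusions is routine and can be recorded without proof.
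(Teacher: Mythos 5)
Your proof is correct and follows exactly the route the paper itself sketches in the paragraph preceding the lemma: invoke the cited polytope approximation of the unit ball, pass to the extreme points (which must lie in the annulus, and are finitely many since the hull is a polytope), and transport everything by the affine map $y\mapsto x_0+ry$ with $\varepsilon'=\varepsilon/r$. Your write-up actually supplies more detail than the paper, which leaves these steps as an ``one easily obtains'' remark.
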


Combining Lemma \ref{lem:lipschitz} and Lemma \ref{lem:polyhedral} allows us to establish the following result.

\begin{Theorem} \label{theo:lipschitz} Let $f: \R^n \to \R$ be convex. Then $f$ is globally Lipschitz continuous if and only if $\ell:= \limsup\limits_{\|x\| \to +\infty} \dfrac{|f(x)|}{\|x\|}<+\infty$. In this case, $\ell$ is also the global Lipschitz modulus of $f$.
\end{Theorem}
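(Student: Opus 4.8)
The plan is to prove the two inequalities $\ell\le\lip(f)$ and $\lip(f)\le\ell$, where $\lip(f)$ denotes the global Lipschitz modulus of $f$ with the convention $\lip(f)=+\infty$ when $f$ is not globally Lipschitz; together these yield both the stated equivalence and the formula for the modulus. The necessity direction $\ell\le\lip(f)$ (which in particular forces $\ell<+\infty$ whenever $f$ is globally Lipschitz) is routine: if $|f(x)-f(y)|\le L\|x-y\|$ for all $x,y$, then $|f(x)|\le|f(0)|+L\|x\|$, so $\frac{|f(x)|}{\|x\|}\le\frac{|f(0)|}{\|x\|}+L$, and letting $\|x\|\to+\infty$ gives $\ell\le L$; taking the infimum over admissible $L$ gives $\ell\le\lip(f)$.

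For sufficiency, assume $\ell<+\infty$ and fix an arbitrary $\delta>0$. By the definition of $\limsup$ there is $R>0$ such that $f(x)\le|f(x)|\le(\ell+\delta)\|x\|$ whenever $\|x\|\ge R$. Fix parameters $\lambda\in(0,1)$ and $\alpha>\max\{1,\lambda/(1-\lambda)\}$, a radius $r$ with $\alpha r\ge R$, and $\varepsilon>0$. Apply Lemma \ref{lem:polyhedral} with center $0$ and radius $\alpha r$ to obtain a finite set $S$ with $B(0,\alpha r)\subset\conv(S)\subset B(0,\alpha r+\varepsilon)$ and, crucially, $S\subset B(0,\alpha r+\varepsilon)\setminus B(0,\alpha r)$, so every $z\in S$ satisfies $\alpha r\le\|z\|<\alpha r+\varepsilon$. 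Feeding this $S$ into Lemma \ref{lem:lipschitz} (applicable since $B(0,\alpha r)\subset\conv(S)$) shows that $f$ is Lipschitz on $B(0,r)$ with constant $\frac{1}{r\lambda(\alpha-1)}\left[\max_{z\in S}f(z)-f(0)\right]$; since $\|z\|\ge\alpha r\ge R$ for every $z\in S$, we get $\max_{z\in S}f(z)\le(\ell+\delta)(\alpha r+\varepsilon)$, hence $f$ is Lipschitz on $B(0,r)$ with constant $\frac{(\ell+\delta)(\alpha r+\varepsilon)-f(0)}{r\lambda(\alpha-1)}$.

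The remaining step is a chain of limits. Letting $\varepsilon\downarrow 0$ (a Lipschitz bound survives the limit since it is a closed condition), $f$ is Lipschitz on $B(0,r)$ with constant $\frac{(\ell+\delta)\alpha}{\lambda(\alpha-1)}-\frac{f(0)}{r\lambda(\alpha-1)}$. Given any $x,y\in\R^n$, this holds for all $r$ large enough that $x,y\in B(0,r)$ and $\alpha r\ge R$; letting $r\to+\infty$ kills the $f(0)/r$ term and gives $|f(x)-f(y)|\le\frac{(\ell+\delta)\alpha}{\lambda(\alpha-1)}\|x-y\|$. Thus $f$ is globally Lipschitz with constant $\frac{(\ell+\delta)\alpha}{\lambda(\alpha-1)}=\frac{\ell+\delta}{\lambda}\left(1+\frac{1}{\alpha-1}\right)$ for every admissible $\lambda,\alpha,\delta$. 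Sending $\alpha\to+\infty$ (the constraint $\alpha>\lambda/(1-\lambda)$ being harmless), then $\lambda\uparrow 1$, then $\delta\downarrow 0$, drives this constant down to $\ell$; hence for each $\eta>0$ the function $f$ is globally Lipschitz with some constant below $\ell+\eta$, so $\lip(f)\le\ell$. Combining with the first inequality gives $\lip(f)=\ell<+\infty$, completing the proof.

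I expect the only genuine subtlety to be the bookkeeping of these iterated limits — one takes $\varepsilon\downarrow 0$ for a fixed ball, then $r\to+\infty$ for a fixed pair $x,y$, and finally optimizes over the free parameters — verifying at each stage that the inequality persists (it does, as all bounds in play are monotone or convergent in the relevant variable). Conceptually, the one point worth emphasizing is why Lemma \ref{lem:polyhedral} is needed rather than the explicit enclosing polytope of the Remark: the former confines the vertices of $S$ to a thin shell around the sphere of radius $\alpha r$, keeping $\max_{z\in S}f(z)$ of order $(\ell+\delta)\alpha r$, whereas the axis-aligned polytope has vertices of norm $\sim\alpha n r$ and would inflate the bound by the dimensional factor $n$, yielding only $\lip(f)\le n\ell$ instead of the sharp $\lip(f)\le\ell$.
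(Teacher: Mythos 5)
Your proposal is correct and follows essentially the same route as the paper's proof: the necessity/modulus direction via the elementary bound $\tfrac{|f(x)|}{\|x\|}\le L+\tfrac{|f(0)|}{\|x\|}$, and the sufficiency direction by feeding the thin-shell polytopes of Lemma \ref{lem:polyhedral} into Lemma \ref{lem:lipschitz} and then sending $r\to+\infty$, $\alpha\to+\infty$, $\lambda\uparrow 1$. The only differences are cosmetic — you quantify the limsup with an $(\ell+\delta,R)$ pair where the paper tracks $\limsup_{r\to\infty}L(\lambda,\alpha,r)$ directly, and you fold the ``only if'' part and the modulus claim into the single inequality $\ell\le\lip(f)$.
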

\begin{proof} 
To justify the ``only if" part, suppose that $f$ is globally Lipschitz continuous for some constant $\sigma\in (0,+\infty)$ and show that $\ell<+\infty$. Indeed, for any $x\neq 0$,
\begin{equation*}
    \dfrac{|f(x)|}{\|x\|}\le \dfrac{|f(x)-f(0)|}{\|x-0\|}+\dfrac{|f(0)|}{\|x\|} \le \sigma + \dfrac{|f(0)|}{\|x\|} \to \sigma  \ \text{ as }\ \|x\|\to +\infty,
\end{equation*}
which thus implies that $\ell= \limsup\limits_{\|x\| \to +\infty} \dfrac{|f(x)|}{\|x\|}<+\infty$.

Next, we verify the ``if" part by fixing $x_0\in \R^n$, $\lambda \in (0,1)$ and $\alpha >\max\left\{1,\dfrac{\lambda}{1-\lambda}\right\}$. Applying Lemma \ref{lem:polyhedral} for all $r > 0$, we can find a finite set of points $S_{\alpha,r}$ such that $S_{\alpha,r} \subset B(x_0,\alpha r+1) \setminus B(x_0,\alpha r)$ and $B(x_0,\alpha r) \subset \conv (S_{\alpha,r}) \subset B(x_0,\alpha r + 1)$. Since $B(x_0,\alpha r) \subset \conv(S_{\alpha,r})$ for all $r > 0$, applying Lemma \ref{lem:lipschitz} yields
a Lipschitz constant $L(\lambda,\alpha,r)$ of $f$ on $B(x_0,r)$ given by
\begin{equation}\label{eq:Lipschitzseq}
    L(\lambda,\alpha,r) = \dfrac{1}{r\lambda (\alpha-1)}\cdot \left[ \max \{f(z) \mid z \in S_{\alpha,r} \} - f(x_0) \right].
\end{equation}
In light of \eqref{eq:Lipschitzseq}, for each $r > 0$ pick any $z_{\alpha,r} \in \mathrm{argmax} \{ f(z) \mid z \in S_{\alpha,r} \}$. By the construction of $S_{\alpha,r}$, it follows that $\alpha r \le \|z_{\alpha,r} - x_0\| \le \alpha r + 1$, which implies that $\|z_{\alpha,r}\| \to +\infty$ and $\dfrac{\|z_{\alpha,r}\|}{r} \to \alpha$ as $r\rightarrow +\infty$. Therefore,
$$\begin{aligned}
    \limsup\limits_{r \to +\infty} L(\lambda,\alpha,r) &= \dfrac{1}{\lambda (\alpha-1)} \cdot \limsup\limits_{r \to +\infty} \left( \dfrac{1}{r}\cdot \left[ \max\{ f(z) \mid z \in S_{\alpha,r} \} - f(x_0) \right]\right) = \dfrac{1}{\lambda (\alpha-1)} \cdot \limsup\limits_{r \to +\infty} \dfrac{f(z_{\alpha,r})}{r}\\
    &\le \dfrac{1}{\lambda (\alpha-1)}   \cdot \limsup\limits_{r \to +\infty} \left( \dfrac{\big|f(z_{\alpha,r})\big|}{\|z_{\alpha,r}\|} \cdot \dfrac{\|z_{\alpha,r}\|}{r} \right) \le \dfrac{1}{\lambda (\alpha-1)}  \cdot \left( \limsup\limits_{r \to +\infty}  \dfrac{\big|f(z_{\alpha,r})\big|}{\|z_{\alpha,r}\|}\right) \cdot \alpha \\
    &\le \dfrac{\alpha}{\lambda (\alpha-1)}  \cdot \ell,
\end{aligned} $$
which justifies the asymptotic upper bound of $L(\lambda,\alpha,r)$ as $r \to +\infty$. Fix any $x, y \in \R^n$, then $x, y \in B(x_0,r)$ for all $r$ large. By the definition of $L(\lambda,\alpha,r)$, we obtain, for all such $r$, the estimate
$$|f(x) - f(y)| \le L(\lambda,\alpha,r) \|x - y\|. $$
Passing to the limits as $r \to +\infty$ yields
$$|f(x) - f(y)| \le \limsup\limits_{r \to +\infty} L(\lambda,\alpha,r) \|x - y\| \le \dfrac{\alpha}{\lambda (\alpha-1)}\cdot \ell \|x - y\|, $$
and further, by letting $\alpha\to +\infty$ one gets
\begin{equation*}
    |f(x) - f(y)| \le \dfrac{1}{\lambda}\cdot \ell \|x - y\|.
\end{equation*}
Finally, passing to the limits as $\lambda \uparrow 1$ yields
\begin{equation*}
    |f(x) - f(y)| \le \ell \|x - y\|.
\end{equation*}
Hence, $f$ is globally Lipschitz continuous with constant $ \ell=\limsup\limits_{\|x\| \to +\infty} \dfrac{|f(x)|}{\|x\|}$. \medskip

Now we show that $\ell$ is the global Lipschitz modulus of $f$. Indeed, assume that $L \ge 0$ is a global Lipschitz constant of $f$, we will show that $L \ge \ell$. Since $L$ is a global Lipschitz constant,
$$\forall x,y \in \R^n, \quad |f(x) - f(y)| \le L\|x-y\|. $$
Hence, for all $x \ne 0$,
$$L \ge \dfrac{|f(x) - f(0)|}{\|x \|} \ge \dfrac{|f(x)| - |f(0)|}{\|x\|}. $$
Passing the limit as $\|x\| \to +\infty$, we obtain
$$L \ge \limsup\limits_{\|x\| \to +\infty} \dfrac{|f(x)| - |f(0)|}{\|x\|} =\limsup\limits_{\|x\| \to +\infty} \dfrac{|f(x)|}{\|x\|} = \ell. $$
Hence, $\ell$ is indeed the global Lipschitz modulus of $f$.
\end{proof}

An immediate corollary of Theorem \ref{theo:lipschitz} provides us with an extension of a well-known property of convex functions. To be specific:

\begin{Corollary} If $f: \R^n \to \R$ is convex, then the following are equivalent:
\begin{itemize}
    \item[\rm \textbf{(i)}] $f$ is constant on $\R^n$;
    \item[\rm \textbf{(ii)}] $f$ is bounded above on $\R^n$;
    \item[\rm \textbf{(iii)}] $\limsup\limits_{\|x\| \to +\infty} \dfrac{|f(x)|}{\|x\|} = 0$.
\end{itemize}
\end{Corollary}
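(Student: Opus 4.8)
The plan is to establish the cycle $\textbf{(i)} \Rightarrow \textbf{(ii)} \Rightarrow \textbf{(iii)} \Rightarrow \textbf{(i)}$, with Theorem \ref{theo:lipschitz} doing the real work in the last implication.

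The implication $\textbf{(i)} \Rightarrow \textbf{(ii)}$ is immediate, since a constant function is trivially bounded above on $\R^n$. For $\textbf{(ii)} \Rightarrow \textbf{(iii)}$, assume $f(x) \le M$ for all $x \in \R^n$. I would produce a matching lower bound via convexity: applying the convexity inequality to the midpoint of $x$ and $-x$ with $\lambda = 1/2$ gives $f(0) \le \frac12 f(x) + \frac12 f(-x)$, hence $f(x) \ge 2f(0) - f(-x) \ge 2f(0) - M$ for every $x$. Thus $f$ is bounded on all of $\R^n$, and consequently $|f(x)|/\|x\| \to 0$ as $\|x\| \to +\infty$, so in particular $\limsup_{\|x\|\to+\infty} |f(x)|/\|x\| = 0$, which is exactly $\textbf{(iii)}$.

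For $\textbf{(iii)} \Rightarrow \textbf{(i)}$, note that $\textbf{(iii)}$ says precisely that the quantity $\ell$ appearing in Theorem \ref{theo:lipschitz} equals $0$, which is finite; hence that theorem applies and tells us that $f$ is globally Lipschitz continuous with global Lipschitz modulus $\ell = 0$. Then for all $x,y \in \R^n$ and every $L > 0$ we have $|f(x)-f(y)| \le L\|x-y\|$, and letting $L \downarrow 0$ forces $f(x) = f(y)$; thus $f$ is constant on $\R^n$, which is $\textbf{(i)}$.

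I do not anticipate a genuine obstacle here: once Theorem \ref{theo:lipschitz} is available, $\textbf{(iii)} \Rightarrow \textbf{(i)}$ is essentially a one-line reading of the modulus formula, and the only place needing a small idea is the lower bound in $\textbf{(ii)} \Rightarrow \textbf{(iii)}$ — but the midpoint convexity estimate $f(0) \le \frac12 f(x) + \frac12 f(-x)$ settles it immediately. The substance of the statement has been front-loaded into the theorem, so the corollary is indeed ``immediate''.
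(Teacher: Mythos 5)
Your proposal is correct and follows the paper's proof essentially verbatim: the same cycle (i) $\Rightarrow$ (ii) $\Rightarrow$ (iii) $\Rightarrow$ (i), the same midpoint convexity bound $f(x) \ge 2f(0) - f(-x) \ge 2f(0) - M$ to get boundedness below, and the same appeal to Theorem \ref{theo:lipschitz} with $\ell = 0$ for the last implication. No issues.
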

\begin{proof} The statement [(i) $\Longrightarrow$ (ii)] is trivially correct. To see [(ii) $\Longrightarrow$ (iii)], assume that $f$ is bounded above on $\R^n$, we will show that $f$ is also bounded below on $\R^n$. Indeed, let $M \in \R$ be an upper bound of $f$ on $\R^n$ and take any $x \in \R^n$. Then, by the convex inequality, 
$$f(x) \ge 2f(0) - f(-x) \ge 2f(0) - M. $$
Hence, $f$ is lower bounded by $2f(0) - M$. Since $f$ is also upper bounded, it follows that $f$ is bounded on $\R^n$, and (iii) trivially holds. The statement [(iii) $\Longrightarrow$ (i)] is a direct consequence of Theorem \ref{theo:lipschitz} when $\ell = 0$.
\end{proof}

Some further comments on Theorem \ref{theo:lipschitz} are collected below.

\begin{Remark}\rm
\label{rem:subgrad}
\quad 
\begin{enumerate}
\item[{\bf (i)}] In the approach we presented in the proof of Theorem \ref{theo:lipschitz}, there is no need to invoke any information on gradients or generalized derivatives. However, there is an alternative approach to prove that if $\limsup\limits_{\|x\| \to +\infty} \dfrac{|f(x)|}{\|x\|} < +\infty$, or merely if $\ell:= \limsup\limits_{\|x\| \to +\infty} \dfrac{f(x)}{\|x\|} < +\infty$ for some convex function $f$, then $f$ is globally Lipschitz continuous with modulus $\ell$, which requires the use of convex subgradients. Indeed, for all $x, y \in \R^n$, there exists a subgradient $y^* \in \R^n$ of $f$ at $y$ (this existence is guaranteed by \cite[Theorem~1.4.2]{urruty}) such that 
\begin{equation}\label{eq:subg}
f(x) \ge f(y) + \la y^*, x - y \ra.
\end{equation}
From \eqref{eq:subg}, dividing both sides by $\|x\|$ and letting $\|x\| \to +\infty$ yields
\begin{equation}\label{L}
+\infty > \limsup\limits_{\|x\| \to +\infty} \dfrac{f(x)}{\|x\|} \ge {\limsup_{\|x\|\to +\infty}\dfrac{\la y^*,x\ra}{\|x\|}=\|y^*\|}.
\end{equation}
Let $\ell:=\limsup\limits_{\|x\| \to +\infty} \dfrac{f(x)}{\|x\|}$. Combining \eqref{L} with \eqref{eq:subg} and the triangle inequality lends us the estimate
$$f(x) - f(y) \ge \la y^*,x-y\ra \ge -\|y^*\|\cdot \|x-y\| \ge -\ell\|x-y\|,$$
and hence $f(x)-f(y)\ge -\ell\|x-y\|$. Switching the roles of $x$ and $y$ shows that 
$$|f(x) - f(y)| \le  \ell\|x-y\|,$$
i.e. $f$ is globally Lipschitz continuous with constant $\ell = \limsup\limits_{\|x\| \to +\infty} \dfrac{f(x)}{\|x\|} \ge 0$. Furthermore, by Theorem \ref{theo:lipschitz}, $f$ is globally Lipschitz continuous with modulus $\limsup\limits_{\|x\| \to +\infty} \dfrac{|f(x)|}{\|x\|} \ge \ell$. Hence, $\ell$ must be the Lipschitz modulus of $f$ and
\begin{equation}\label{f=|f|}
    \limsup\limits_{\|x\| \to +\infty} \dfrac{f(x)}{\|x\|} = \limsup\limits_{\|x\| \to +\infty} \dfrac{|f(x)|}{\|x\|}.
\end{equation}
 Note that we only established that \eqref{f=|f|} holds when either side is finite. However, if the right hand side of \eqref{f=|f|} is $+\infty$, the left hand side is non-negative by part (i), and thus have to be $+\infty$ as well, since the finiteness of the left hand side implies \eqref{f=|f|} which contradicts the infiniteness we assumed for the right hand side. Hence, \eqref{f=|f|} always holds for any convex function $f: \R^n \to \R$.
\item[{\bf (ii)}] Extensions of the proof given in Remark \ref{rem:subgrad}(i) (and thus, of Theorem \ref{theo:lipschitz} as well) to infinite-dimensional Banach spaces are made possible by imposing, in addition, the lower semicontinuity of $f$ which guarantees the nonemptiness of the subdifferentials (see, e.g., \cite[Proposition~2.111~and~Proposition~2.126(iv)]{BS}). 
   \item[\textbf{(iii)}] If $f$ is non-convex, we cannot obtain the same conclusions as in Theorem \ref{theo:lipschitz}. Indeed, consider the univariate function $$f(x) = \left\{\begin{array}{ll}
        \dfrac{1}{|x|}, &  x \ne 0, \\
        0, & x = 0. 
    \end{array} \right.$$
    Then, $\limsup\limits_{|x| \to +\infty} \dfrac{|f(x)|}{|x|} = 0$ but $f$ is not globally Lipschitzian. Indeed, take $x_k = \dfrac{1}{k}$, $y_k = \dfrac{1}{k+1}$ then $|f(x_k) - f(y_k)| = 1$ but $|x_k-y_k| \to 0$ so there cannot exist a global Lipschitz constant. 
    \item[\textbf{(iv)}] The characterization in Theorem \ref{theo:lipschitz} still holds when we replace the Euclidean norm by any norm, since all norms on $\R^n$ are equivalent.
\end{enumerate}
\end{Remark}

Contemplating further, even for non-convex functions, the ``only if" part in Theorem \ref{theo:lipschitz} also hold. The details are provided as follows.

\begin{Remark}\rm When $\limsup\limits_{\|x\| \to +\infty} \dfrac{|f(x)|}{\|x\|} = +\infty$, even without the assumed convexity of $f: \R^n \to \R$, we can show that $\lim\limits_{r \to +\infty} l(r) = +\infty$ where $l(r)$ is the Lipschitz modulus of $f$ on $B(x_0,r)$, $r > 0$. In other words, in no way can we obtain a finite asymptotic upper bound of any sequence of Lipschitz constants of $f$ on $B(x_0,r)$ as $r \to +\infty$. Indeed, fix any $x \in \R^n \setminus \{0\}$, then for all sufficiently large $r > 0$,
$$\dfrac{|f(x) - f(0)|}{\|x\|} \le l(r). $$
Let $r \to +\infty$, we obtain
$$\dfrac{|f(x) - f(0)|}{\|x\|} \le \lim\limits_{r \to +\infty} l(r). $$
Therefore,
$$\begin{aligned}
    +\infty = \limsup\limits_{\|x\| \to +\infty} \dfrac{|f(x)|}{\|x\|} &\le \limsup\limits_{\|x\| \to +\infty} \left( \dfrac{|f(x) - f(0)|}{\|x\|} +  \dfrac{|f(0)|}{\|x\|}\right)\\
    &\le  \limsup\limits_{\|x\| \to +\infty} \left( \lim\limits_{r \to +\infty} l(r) +  \dfrac{|f(0)|}{\|x\|}\right) =  \lim\limits_{r \to +\infty} l(r).
\end{aligned} $$

In other words, $\lim\limits_{r \to +\infty} l(r) = +\infty$, as we need to prove. Note that this observation also acts as an alternative proof for the ``only if" part in Theorem \ref{theo:lipschitz} while extending it to possibly non-convex functions as well. Namely, a function $f: \R^n \to \R$ is not globally Lipschitz continuous if $\ell := \limsup\limits_{\|x\| \to +\infty} \dfrac{|f(x)|}{\|x\|} = +\infty$.
\end{Remark}

To wrap up our note, the following two examples produce classes of globally Lipschitz continuous convex functions that are not norms, as well as their global Lipschitz modulus.

\begin{example}[logistic loss functions]\rm \label{exam:logloss}
As demonstrated via Theorem \ref{theo:lipschitz} and Remark \ref{rem:subgrad}(i), the global Lipschitz continuity of a convex function $f:\R^n \to \R$ amounts to the fulfillment of the asymptotic condition $\limsup_{\|x\|\to +\infty}\dfrac{f(x)}{\|x\|}<+\infty$. Roughly speaking, such convex function $f$ is bounded above by a stretch/shrink of $\|x\|$ at infinity, which seems a bit restrictive. Still, an illuminating example can be found from practical modelling, e.g., from the logistic loss function in support vector machine problems \cite{logloss}: 
\begin{equation*}
    f(x) = \ln \left( 1+ e^{\mathbf{b}^T x}\right), \quad x\in \R^n,
\end{equation*}
where $\mathbf{b}:=(b_1,b_2,\ldots,b_n)\in \R^n$ represents the optimal hyperplane. Indeed, let us check the convexity and the global Lipschitz continuity of $f$ as well as the asymptotic condition $\limsup_{\|x\|\to +\infty}\dfrac{f(x)}{\|x\|}<+\infty$. One easily checks that
\begin{equation}\label{eq:gradf}
\nabla f(x) =  \dfrac{e^{\mathbf{b}^T x}}{1+e^{\mathbf{b}^T x}}\mathbf{b} = \mathbf{b} - \dfrac{1}{1+e^{\mathbf{b}^T x}}\mathbf{b}
\end{equation}
and 
$$
\nabla^2 f(x) =\dfrac{e^{\mathbf{b}^T x}}{\left(1+e^{\mathbf{b}^T x}\right)^2} \cdot \begin{pmatrix}
    b_1^2 & b_1b_2 &\ldots & b_1 b_n \\
    b_2 b_1 & b_2^2 &\ldots & b_2 b_n \\
    \ldots & \ldots &\ldots &\ldots\\
    b_n b_1 &b_n b_2 &\ldots & b_n^2
    \end{pmatrix} = \dfrac{e^{\mathbf{b}^T x}}{\left(1+e^{\mathbf{b}^T x}\right)^2}\cdot \mathbf{b}\mathbf{b}^T.
$$
Hence, for any $u\in \R^n$,
\begin{equation*}
    u^T \nabla f^2 (x) u = \dfrac{e^{\mathbf{b}^T x}}{\left(1+e^{\mathbf{b}^T x}\right)^2}\cdot u^T \mathbf{b}\mathbf{b}^T u = \dfrac{e^{\mathbf{b}^T x}}{\left(1+e^{\mathbf{b}^T x}\right)^2}\cdot (u^T \mathbf{b})^2 \ge 0
\end{equation*}
which guarantees the convexity of $f$.

As $f$ is continuously differentiable on $\R^n$, for arbitrary $x,y\in \R^n$, it follows from the classical mean value theorem for multivariable functions and \eqref{eq:gradf} that
\begin{equation*}
    |f(x)-f(y)|\le \|\nabla f(z)\|\cdot \|x-y\| \le \dfrac{e^{\mathbf{b}^T x}}{1+e^{\mathbf{b}^T x}}\cdot \|\mathbf{b}\| \cdot \|x-y\| \le \|\mathbf{b}\| \cdot \|x-y\|
\end{equation*}
for some $z$ belonging to the line segment connecting $x,y$. Consequently, $f$ is Lipschitz continuous on $\R^n$ with a constant $\|\mathbf{b}\|$. 

To find an upper bound for the limit superior of $\dfrac{f(x)}{\|x\|}$ as $\|x\|$ tends to $+\infty$, first observe for $x\neq 0$ that 
\begin{equation*}
    \dfrac{f(x)}{\|x\|} = \dfrac{\ln \left(1+e^{\mathbf{b}^T x}\right)}{|\mathbf{b}^T x|+1}\cdot \dfrac{|\mathbf{b}^T x|+1}{\|x\|},
\end{equation*}
where
\begin{equation*}
    \dfrac{\ln \left(1+e^{\mathbf{b}^T x}\right)}{|\mathbf{b}^T x|+1} \le \dfrac{\ln \left(1+e^{|\mathbf{b}^T x|}\right)}{|\mathbf{b}^T x|+1} \le \dfrac{\ln \left(e^{|\mathbf{b}^T x|+1}\right)}{|\mathbf{b}^T x|+1} = \dfrac{|\mathbf{b}^T x|+1}{|\mathbf{b}^T x|+1}=1
\end{equation*}
and 
\begin{equation*}
    \dfrac{|\mathbf{b}^T x|+1}{\|x\|} \le \dfrac{\|\mathbf{b}\|\cdot \|x\|}{\|x\|}+\dfrac{1}{\|x\|} = \|\mathbf{b}\| + \dfrac{1}{\|x\|} \to \|\mathbf{b}\| \ \text{ as }\ \|x\|\to +\infty.
\end{equation*}
Therefore,
\begin{equation*}
    \limsup_{\|x\|\to +\infty}\dfrac{f(x)}{\|x\|} \le \|\mathbf{b}\|<+\infty.
\end{equation*}
\end{example}

Another example to portray the diversity of convex functions which satisfies the asymptotic condition $\limsup\limits_{\|x\| \to +\infty} \dfrac{|f(x)|}{\|x\|}<+\infty$ is provided below, as a representative for non-smooth functions.

\begin{Example}[convex piecewise linear functions]\rm Consider $f: \R^n \to \R$ be defined by
$$f(x) = \max \{ l_1(x), l_2(x),\ldots, l_p(x) \}, $$
where $l_1, l_2, \ldots, l_p$ are affine functions. It is shown in \cite[Theorem 2.49]{rockafellar} that every convex, piecewise linear function $f: \R^n \to \R$ (i.e. a function whose domain is split into a finite number of polyhedrals, on each of which it coincides with an affine function) can be presented using this simple form. For $f$, we will verify the convexity, then the global Lipschitz continuity and concurrently find the global Lipschitz modulus. Indeed, for each $i = 1,2,\ldots,p$, let $l_i(x) := \mathbf{b_i}^Tx + \alpha_i$, where $\mathbf{b_i} \in \R^n$ and $\alpha_i \in \R$. First, we see that $f$ has to be convex on $\R^n$, because each $l_i$ is convex on $\R^n$. Denote $C_i := \{x \in \R^n \mid f(x) = l_i(x)\}$ then
$$\forall i = 1,2,\ldots,p, \  \forall x \in C_i, \quad |f(x)| = |l_i(x)| =  \left| \mathbf{b_i}^Tx + \alpha_i  \right| \le \|\mathbf{b_i}\|.\|x\| + |\alpha_i|. $$
Appealing to the fact that $\R^n = \bigcup\limits_{i=1}^p C_i$, we obtain
$$\forall x \in \R^n, \quad |f(x)| \le \max\limits_{i=1,2,\ldots,p} \{ \|\mathbf{b_i}\|.\|x\| + |\alpha_i| \} \le  \left(\max\limits_{i=1,2,\ldots,p} \|\mathbf{b_i}\|\right) \cdot \|x\| + \max\limits_{i=1,2,\ldots,p} |\alpha_i|. $$
Divide both sides by $\|x\|$ and pass to the limits as $\|x\| \to +\infty$, we obtain
\begin{equation}\label{eq:leq}
   \limsup\limits_{\|x\| \to +\infty} \dfrac{|f(x)|}{\|x\|} \le \max\limits_{i=1,2,\ldots,p} \|\mathbf{b_i}\|. 
\end{equation}
On the other hand, fix any index $i = 1,2,\ldots,p$. It is clear that
$$\forall x \in \R^n, \quad \mathbf{b_i}^Tx + \alpha_i = l_i(x) \le f(x) \le |f(x)|. $$
Let us again divide both sides by $\|x\|$ and let $\|x\| \to +\infty$, which lends us the estimate
$$ \limsup\limits_{\|x\| \to +\infty} \dfrac{\mathbf{b_i}^Tx }{\|x\|}\le \limsup\limits_{\|x\| \to +\infty} \dfrac{|f(x)|}{\|x\|}. $$
Noting that for each $i = 1,2,\ldots,p$, the left hand side is precisely $\| \mathbf{b_i} \|$, since this inequality holds for all indexes $i$, it follows suit that
\begin{equation}\label{eq:geq}
  \max\limits_{i=1,2,\ldots,p} \|\mathbf{b_i}\|  \le \limsup\limits_{\|x\| \to +\infty} \dfrac{|f(x)|}{\|x\|}. 
\end{equation}
Combining \eqref{eq:leq} and \eqref{eq:geq} yields $\limsup\limits_{\|x\| \to +\infty} \dfrac{|f(x)|}{\|x\|} =  \max\limits_{i=1,2,\ldots,p} \|\mathbf{b_i}\| < +\infty$. By Theorem \ref{theo:lipschitz}, $f$ is globally Lipschitz continuous with modulus $\ell = \max\limits_{i=1,2,\ldots,p} \|\mathbf{b_i}\|$.
\end{Example}

\end{document}